\newcommand{\diag}{{\text{\upshape diag}}}
\newtheorem{Defi}{Definition}
\newtheorem{thm}{\bf Theorem} [section]
\newtheorem{lem}[thm]{Lemma}
\newtheorem{rem}[thm]{Remark}
\numberwithin{equation}{section}
 \numberwithin{Lem}{section}
 \numberwithin{Defi}{section}
 \numberwithin{Theo}{section}
 \numberwithin{Rem}{section}
  \numberwithin{Coro}{section}
  \numberwithin{Fig}{section}
\title{Transition pathways for a class of high dimensional  stochastic dynamical systems with L\'evy noise
\footnotetext{\\
Jianyu Hu\\
School of Mathematics and Statistics \& Center for Mathematical Sciences \& Hubei National Center for Applied Mathematics, Huazhong University of Science and Technology, Wuhan
430074, China.\\
E-mail: jianyuhu@hust.edu.cn}
\footnotetext{\\Jianyu Chen\\
School of Mathematics and Statistics \& Center for Mathematical Sciences \& Hubei National Center for Applied Mathematics, Huazhong University of Science and Technology, Wuhan
430074, China.\\
E-mail: jianyuchen@hust.edu.cn\\
$^*$ Corresponding author}}
\author{Jianyu Hu and Jianyu Chen$^{*}$}
\begin{document}

\maketitle
\begin{abstract}
This work is devoted to deriving the Onsager-Machlup action functional for a class of stochastic differential equations with (non-Gaussian) L\'{e}vy process as well as Brownian motion in high dimensions. This is achieved by applying the Girsanov transformation for probability measures and then by a path representation. The Poincar\'{e} lemma is essential to handle such path representation problem in high dimensions. We provide a sufficient condition on the vector field such that this path representation holds in high dimensions. Moreover, this Onsager-Machlup action functional may be considered as the integral of a Lagrangian. Finally, by a variational principle, we investigate the most probable transition pathways analytically and numerically.

Key words: Onsager-Machlup action functional, L\'{e}vy process, Girsanov transformation, path representation, Poincar\'{e} lemma, Lagrangian, the most probable transition path.

\end{abstract}

\section{Introduction}
Stochastic dynamical systems are mathematical models for complex phenomena in physical, chemical and biological sciences \cite{arnold2013random,duan2015introduction,duan2014effective,imkeller2012stochastic}. In contrast to the deterministic systems, noisy fluctuations result in the possibility of transitions between metastable states \cite{ditlevsen1999observation,qiu2000kuroshio}.  For many irreversible systems, the absence of detailed balance results in the difficulties of analyzing the asymptotic behaviors. Onsager-Machlup action functional is an essential tool to study these transition phenomena. We will derive the Onsager-Machlup action functional for a class of stochastic differential equations with non-Gaussian L\'{e}vy process in high dimensions and this enables to study the most probable transition pathway for many scientific models.

Onsager and Machlup considered the probability of paths of a diffusion process \cite{onsager1953fluctuations}. For stochastic differential equations, the Onsager-Machlup action functional has been widely investigated \cite{capitaine1995onsager,capitaine2000onsager,fujita1982onsager,hara2016stochastic,ikeda2014stochastic,shepp1992note,takahashi1981probability}. The key point is to express the transition probability of a diffusion process in terms of a functional integral over paths of the process. As in D\"{u}rr and Bach \cite{durr1978onsager}, Onsager-Machlup action functional may be regarded as a Lagrangian for determining the most probable path of a diffusion process by a variational principle.

Note that the most existing works on Onsager-Machlup action functional are for systems with Brownian motion. However, non-Gaussian random phenomena including heavy-tailed distributions have been found to be suitable in modeling biological evolution, climate change and other complex scientific and engineering systems \cite{bottcher2013levy,hao2014asymmetric,jourdain2012levy,zheng2016transitions}. So we are urgent to derive the Onsager-Machlup action functional for stochastic dynamical systems with L\'{e}vy noise. Bardina et.al \cite{bardina2002asymptotic} tried to deal with jump functions directly rather than using the Girsanov  theorem to obtain the asymptotic evaluation of the Poisson measure for a tube in the path space. More recently, Chao and Duan \cite{chao2019onsager} used the Girsanov transformation to absorb the vector field or drift term and thus derived the Onsager-Machlup action functional in one dimensional systems with addictive L\'{e}vy process of small jumps. But this result does not apply to general high dimensional stochastic differential equations. Hu and Duan \cite{hu2020onsager} derived the Onsager-Machlup action functional for a stochastic partial differential equations with additive noise, but did not have analysis for Euler-Lagrange equations and the most probable transition pathways.

In this paper, we derive the Onsager-Machlup action functional for a class of stochastic differential equations with additive L\'{e}vy noise in high dimensions, assuming the vector field has a certain structure. We use the Brownian motion to absorb the drift vector field by applying the Girsanov transformation for probability measures and then achieve the path representation by Poincar\'{e} lemma for high dimensions.

The Onsager-Machlup action functional could be considered as the integral of a Lagrangian. By a variational principle, the most probable pathway connecting two metastable states satisfies either the Euler-Lagrange equation or the corresponding Hamiltonian system. Thus, it reduces to a two-point boundary value problem when we capture the most probable pathway. The shooting method can further analyze such a path numerically.

This article is arranged as follows. In section 2, we derive the Onsager-Machlup action functional for a class of stochastic differential equations with additive L\'{e}vy noise in high dimensions. In section 3, we introduce the Lagrangian mechanics for studying the most probable transition pathways. In section 4, we implement our theoretical approaches on the Maier-Stein systems. Finally, some discussions are in section 5.

\section{Onsager-Machlup action functional}
As scientific modeling requries taking L\'evy noise into account, and the L\'evy-Ito decomposition theorem \cite{applebaum2009levy} tells us that a large class of noise is the sum of Brownian noise and L\'evy noise, we thus consider the following stochastic differential equation:
\begin{equation}
\begin{split}
dX(t)=f(X(t))dt+g(X(t))dW(t)+ dL(t), t\in[0,1],
\end{split}
\end{equation}
with initial data $X(0)=x_0 \in \mathbb{R}^d$, where $f:\mathbb{R}^d\rightarrow\mathbb{R}^d$ is a regular function, $g$ is a $d\times d$ matrix-valued function in $\mathbb{R}^d$, $W$ is a Brownian motin in $\mathbb{R}^d$, and $L$ is a L\'{e}vy process in $\mathbb{R}^d$ which has the following L\'{e}vy-It\^{o} formulation
\begin{equation}
\begin{split}
L(t)=\sum\limits_{j=1}^{d}L_j(t)=\sum\limits_{j=1}^{d}\int_{|z|<1}z\tilde{N}_j(t,dz)e_j+\sum\limits_{j=1}^{d}\int_{|z|\geq1}zN_j(t,dz)e_j.
\end{split}
\end{equation}
Here, the basis $\{e_j\}_{j=1}^d$ is orthonormal in $\mathbb{R}^d$, $N_j(dt,dz)$ is the Poisson measure associated with one-dimensional L\'{e}vy process $L_j(t)$, and $\tilde{N}_j(dt,dz)=N_j(dt,dz)-\nu_j(dz)dt$ is the corresponding compensated Poisson random measure. We will also denote by $X^L$ the stochastic convolution, that is, the solution to (2.1) with $f = 0$ and $x_0 = 0$.

\subsection{The framework}
The well-posedness of  stochastic differential equation (2.1) has been widely investigated. The following theorem is a special case of \cite[Theorem 3.1]{albeverio2010existence}.
\begin{thm}(Global Solution)
Assume that the functions f and g are locally Lipschitz and satisfy “one sided linear growth” condition in the following sense:

(1) (Locally Lipschitz) For every $K>0$ there exists a constant $C_1>0$ such that, for all $|x|,|y|\leq K$,
\begin{align*}
 |f(x)-f(y)|^2+|g(x)-g(y)|^2\leq C_1|x-y|^2.
\end{align*}

(2) (One sided linear growth) There exists a constant $C_2>0$ such that, for all $x\in \mathbb{R}^d$,
\begin{align*}
tr(g(x)^{T}g(x))+2x\dot f(x)\leq  C_2|x|^2.
\end{align*}

Then, there exists a unique global solution to (2.1) and the solution process is adapted and c\`{a}dl\`{a}g.
\end{thm}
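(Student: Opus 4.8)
The plan is to establish existence and uniqueness of a global solution to the jump-diffusion SDE (2.1) by invoking the general existence theory for SDEs driven by Lévy processes and verifying that the stated hypotheses match the requirements of that theory. Since the statement explicitly identifies the result as a special case of \cite[Theorem 3.1]{albeverio2010existence}, the proof should be structured as a reduction: I would first recast (2.1) in the standard form used in that reference, namely an SDE with drift $f$, diffusion coefficient $g$, and a jump term governed by the compensated Poisson measures $\tilde N_j$ together with the large-jump part governed by $N_j$. The additive nature of the Lévy term means the jump coefficient is the constant (identity-type) map $z \mapsto z e_j$, so the jump-integrability conditions of the general theorem reduce to the standing assumption that each $\nu_j$ is a genuine Lévy measure, i.e. $\int_{|z|<1} |z|^2 \, \nu_j(dz) < \infty$.

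Next I would check the two structural hypotheses. The local Lipschitz condition (1) is exactly the hypothesis needed to guarantee pathwise uniqueness and local existence up to a possibly finite explosion time: on each ball of radius $K$ the coefficients $f$ and $g$ are Lipschitz with constant controlled by $\sqrt{C_1}$, and since the jump coefficient is constant it is trivially Lipschitz, so the combined coefficient vector satisfies the local Lipschitz condition of \cite{albeverio2010existence}. This yields a unique maximal local solution that is adapted and càdlàg, the càdlàg regularity being inherited from the jumps of the driving Lévy process. The one-sided linear growth condition (2), namely $\mathrm{tr}(g(x)^T g(x)) + 2 x \cdot f(x) \le C_2 |x|^2$, is precisely the Lyapunov-type estimate that prevents explosion: it is the condition under which $\frac{d}{dt}\mathbb{E}|X(t)|^2$ can be bounded linearly in $\mathbb{E}|X(t)|^2$, so that a Gronwall argument forbids blow-up in finite time.

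The main technical step, and the one I expect to be the principal obstacle, is upgrading the local solution to a global one by ruling out explosion. Here I would apply Itô's formula for jump-diffusions to $|X(t)|^2$: the continuous part produces the drift contribution $2 X \cdot f(X)$ and the diffusion contribution $\mathrm{tr}(g^T g)$, while the jump part of the additive Lévy term produces terms controlled by the second moments $\int_{|z|<1}|z|^2 \nu_j(dz)$ of the small jumps (the large-jump part being handled separately as it has finite activity and contributes only finitely many bounded increments on $[0,1]$). The one-sided growth hypothesis (2) then bounds the expected infinitesimal growth of $|X(t)|^2$ by a constant multiple of $\mathbb{E}|X(t)|^2$ plus a constant coming from the Lévy measure, and a standard localization-plus-Gronwall argument shows the explosion time is almost surely infinite. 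Since all of these estimates are carried out in full generality in \cite{albeverio2010existence}, the cleanest route is to verify hypothesis by hypothesis that (2.1) fits their framework and then quote their conclusion; the only genuine care needed is in matching the additive Lévy structure and confirming that the jump integrability is subsumed by the standing Lévy-measure assumption.
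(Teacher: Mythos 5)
Your proposal is correct and matches the paper's approach: the paper gives no independent proof of Theorem 2.1 at all, simply identifying it as a special case of \cite[Theorem 3.1]{albeverio2010existence}, which is exactly the reduction you carry out (constant jump coefficient, local Lipschitz for uniqueness/local existence, one-sided linear growth for non-explosion). Your additional sketch of the It\^{o}-plus-Gronwall non-explosion argument is a faithful summary of what the cited theorem's proof does internally, so nothing in your route diverges from the paper's.
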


Our goal is to study the limiting behaviour of ratios of the form
\begin{equation}
\begin{split}
\gamma_{\epsilon}(\phi)=\frac{\mathbb{P}(\|X-\phi\|\leq\epsilon)}{\mathbb{P}(\|X^L\|\leq\epsilon)},
\end{split}
\end{equation}
as $\epsilon$ tends to $0$. Here $\phi$ is a deterministic function satisfying some regularity conditions and $\|\cdot\|$ is a suitable norm defined on the functions from $[0, 1]$ to $\mathbb{R}^d$.

Denote the space of the solution paths of (2.1) by $D^{x_0}([0,1])$, which is defined as
\begin{align*}
D^{x_0}([0,1])=\{x(t)|x:[0,1]\rightarrow \mathbb{R}^d, x\ is\ c{a}dl{a}g, x(0)=x_0\}.
\end{align*}
As every c\`{a}dl\`{a}g function on $[0,1]$ is bounded, we shall define the uniform norm
\begin{equation}
\begin{split}
\|x\|=\sup\limits_{0\leq t \leq 1}|x(t)|\ ,x\in D^{x_0}([0,1]).
\end{split}
\end{equation}
Equipped with this uniform norm, $D^{x_0}([0,1])$ is a Banach space. Now, we define the Onsager-Machlup action functional \cite{bardina2003onsager} associated with the stochastic differential equation (2.1).
\begin{Defi}
When $\lim\limits_{\epsilon\rightarrow0}\gamma_{\epsilon}(\phi)=\exp\{-J_0(\phi)\}$ for all $\phi$ which are located in a reasonable class of functions, the functional $J_0$ is called the Onsager-Machlup action functional associated to system (2.1). The process $X^L$ is called the reference process.
\end{Defi}
The Karhunen–Lo\`{e}ve expansion \cite{bardina2003onsager} method is applied to investigate the Onsager-Machlup action functional by converting stochastic integrals to independent random variables, which is invalid for L\'evy-driven process due to the nonequivalence of irrelevance and independence. It is hard to analyze the limiting behaviors. But the path representation method works well here, because it treats the jumps as a whole by It\^{o}'s formula and then we are able to control these parts through the following lemma.
\begin{lem}
A L\'{e}vy process with L\'{e}vy triplet $(A,\nu, b)$ has bounded variation if and only if $A=0$ and the jump measure satisfies $\int_{|x|<1}x\nu(dx)<\infty$.
\end{lem}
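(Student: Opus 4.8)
The plan is to exploit the L\'evy--It\^o decomposition to split the process into a continuous component and a jump component, and to analyze the bounded-variation property of each separately. Write
\[
X(t) = bt + \sqrt{A}\,W(t) + \int_0^t\int_{|x|<1} x\,\tilde N(ds,dx) + \int_0^t\int_{|x|\geq 1} x\,N(ds,dx),
\]
so that $X = X^c + X^d$, where $X^c(t) = bt + \sqrt{A}\,W(t)$ is the pathwise continuous part and $X^d$ collects the jumps. The large-jump term is a compound Poisson process (finitely many jumps on $[0,t]$, since the L\'evy measure satisfies $\nu(\{|x|\geq 1\})<\infty$), hence always of bounded variation; the whole question therefore reduces to the Gaussian coefficient $A$ and the small jumps.

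For sufficiency, assume $A=0$ and $\int_{|x|<1}|x|\,\nu(dx)<\infty$. The first step is to show that the small jumps are absolutely summable. By the Campbell formula the expected total magnitude satisfies
\[
\mathbb{E}\Big[\sum_{s\leq t}|\Delta X(s)|\,\mathbf{1}_{\{|\Delta X(s)|<1\}}\Big] = t\int_{|x|<1}|x|\,\nu(dx)<\infty,
\]
so the jump series converges almost surely. This lets me absorb the compensator into the drift and rewrite
\[
X(t) = \Big(b-\int_{|x|<1}x\,\nu(dx)\Big)t + \sum_{s\leq t}\Delta X(s),
\]
a finite linear drift plus an absolutely convergent sum of jumps, which is manifestly of bounded variation.

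For necessity, assume $X$ has bounded variation almost surely. Any bounded-variation path has absolutely summable jumps dominated by its total variation, so $\sum_{s\leq t}|\Delta X(s)|<\infty$; restricting to small jumps forces the Poisson integral $\int_0^t\int_{|x|<1}|x|\,N(ds,dx)$ to be finite, which by the intensity computation above yields $\int_{|x|<1}|x|\,\nu(dx)<\infty$. Removing the (now bounded-variation) jump part leaves the continuous part $X^c(t)=bt+\sqrt{A}\,W(t)$, which must itself be of bounded variation; since a nondegenerate Brownian motion has infinite variation on every interval almost surely, this is possible only if $A=0$.

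The main obstacle I expect is the rigorous equivalence between the almost-sure finiteness of the Poisson integral $\int_0^t\int_{|x|<1}|x|\,N(ds,dx)$ and the deterministic integrability $\int_{|x|<1}|x|\,\nu(dx)<\infty$. This is where a truncation together with monotone convergence (or the exponential/Campbell formula for Poisson integrals, combined with the Kolmogorov zero--one law for the tail event of convergence) is needed, and it is also the step that justifies rearranging the compensated small-jump martingale into an uncompensated sum. I note that the stated integrability condition should read $\int_{|x|<1}|x|\,\nu(dx)<\infty$ with the modulus, since it is the total jump magnitude that controls the variation.
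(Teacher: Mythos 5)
Your proof is correct and follows exactly the standard L\'evy--It\^o decomposition argument: the paper itself gives no proof of this lemma, deferring to Applebaum (p.~112) and Bertoin (p.~15), and the argument in those references is precisely the one you sketch, including the key technical step (almost-sure finiteness of the small-jump Poisson integral $\int_0^t\int_{|x|<1}|x|\,N(ds,dx)$ is equivalent to $\int_{|x|<1}|x|\,\nu(dx)<\infty$, via the exponential formula or truncation plus monotone convergence) that you correctly flag as the delicate point. You are also right that the paper's statement contains a typo: the condition should read $\int_{|x|<1}|x|\,\nu(dx)<\infty$ with the modulus, as in your version.
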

We refer to \cite[Page 112]{applebaum2009levy}  \cite[Page 15]{bertoin1996levy} for the detail proof.

The Onsager-Machlup action functional quantifies the probability of the solution path up to a given function on a small tube. If we minimize this functional given two metastable states, we will obtain the most probable transition pathway between these two states \cite{ren2020identifying,zheng2020maximum}. This is an important application to study the dynamical behaviours for stochastic differential equations.

\subsection{Derivation of Onsager-Machlup action functional}
In this subsection, we present the derivation of the Onsager-Machlup action functional for a class of high dimensional stochastic dynamical systems. The reference process $X^L$ is effective if the probability $\mathbb{P}(\|X^L\|\leq\epsilon)$ is not related to the drift coefficient and diffusion coefficient. Under the uniform norm (2.4), no such effective reference process exists in the multiplicative case. See more discussions in Section 5. To this end, we shall consider the stochastic differential equation with additive noise:
\begin{equation}
\begin{split}
dX(t)=f(X(t))dt+BdW(t)+ dL(t), t\in[0,1],
\end{split}
\end{equation}
with initial data $X(0)=x_0 \in \mathbb{R}^d$, where $B$ is a nondegenerate $d\times d$ matrix.

We suppose that our drift coefficient is locally Lipschitz and satisfy "one sided linear growth" condition. So by Theorem 2.1, there exists a unique global solution to (2.5) and the solution process is adapted and c\`{a}dl\`{a}g.

Before deriving the Onsager-Machlup action functional, we first introduce an auxiliary system and take our deterministic functions $\phi$ from a so-called Cameron-Martin space. Fix a function $h\in L^2([0,1];\mathbb{R}^d)$. Let $\phi^h$ be the solution of the finite dimensional equation
\begin{equation}
\begin{split}
&d\phi^h(t)=Bh(t)dt, t\in[0,1],\\
&\phi^h(0)=x_0\in \mathbb{R}^d.
\end{split}
\end{equation}
Our result is of high dimensions, so we need the following lemma, which ensures us to generalize the result from one dimension to high dimensions.
\begin{lem}
Let $F$ be a mapping from $\mathbb{R}^d$ to $\mathbb{R}^d$. For each $x\in \mathbb{R}^d$, assume that $DF(x)$ is a symmetric matrix from $\mathbb{R}^d$ to $\mathbb{R}^d$. Then, there exists a smooth function $V:\mathbb{R}^d\rightarrow \mathbb{R}$, such that for all $x\in \mathbb{R}^d$, DV(x)=F(x).
\end{lem}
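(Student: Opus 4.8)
The statement is exactly the Poincar\'e lemma phrased for vector fields: the symmetry of $DF$ is the condition that the $1$-form $\omega=\sum_{i=1}^d F_i\,dx_i$ is closed (since $(DF)_{ik}=\partial_k F_i$ and $(DF)_{ki}=\partial_i F_k$), and because $\mathbb{R}^d$ is star-shaped, hence contractible, every closed form is exact, so $\omega=dV$ for a scalar potential $V$. The plan is to make this explicit by constructing $V$ through a radial line integral and then verifying $DV=F$ by differentiating under the integral sign.

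First I would define, for $x\in\mathbb{R}^d$,
\begin{equation*}
V(x)=\int_0^1 \sum_{i=1}^d F_i(tx)\,x_i\,dt=\int_0^1 \langle F(tx),x\rangle\,dt,
\end{equation*}
which is the integral of $F$ along the straight segment from the origin to $x$. Since $F$ is $C^1$ (its Jacobian $DF$ exists everywhere), the integrand is continuously differentiable in $x$ with locally bounded $x$-derivatives, so $V$ is well defined and differentiation under the integral sign is justified.

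Next, fixing $k$ and applying the chain rule together with $\partial_k(tx_j)=t\delta_{kj}$ and $\partial_k x_i=\delta_{ki}$, I compute
\begin{equation*}
\partial_k V(x)=\int_0^1\Big(F_k(tx)+t\sum_{i=1}^d(\partial_k F_i)(tx)\,x_i\Big)\,dt.
\end{equation*}
Here the symmetry hypothesis $\partial_k F_i=\partial_i F_k$ is the decisive step: it lets me replace $\partial_k F_i$ by $\partial_i F_k$, after which $\sum_{i=1}^d(\partial_i F_k)(tx)\,x_i=\frac{d}{dt}F_k(tx)$. The integrand then collapses to the total $t$-derivative $\frac{d}{dt}\big(t\,F_k(tx)\big)$, and the fundamental theorem of calculus gives $\partial_k V(x)=\big[t\,F_k(tx)\big]_0^1=F_k(x)$ for every $k$, that is, $DV(x)=F(x)$. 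Smoothness of $V$ is then inherited from that of $F$, since $\partial_k V=F_k$ for each $k$.

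I do not expect a genuine obstacle here, as the content is classical; the only points needing care are the regularity required to differentiate under the integral sign (supplied by the existence of $DF$) and the observation that no topological obstruction arises because the domain is all of $\mathbb{R}^d$, which is star-shaped about the origin. Were $F$ only continuous with symmetric distributional derivative, the same computation would go through after mollification, but under the stated smoothness the direct argument above suffices.
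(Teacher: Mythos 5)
Your proposal is correct, and it rests on the same observation as the paper's proof: the symmetry of $DF$ says precisely that the $1$-form $\omega=\sum_{i=1}^d F_i\,dx_i$ is closed, and the triviality of the topology of $\mathbb{R}^d$ then yields exactness. The difference is in how exactness is obtained. The paper treats the Poincar\'e lemma as a black box, citing Lang and Kriegl--Michor; you instead prove the relevant special case from scratch via the explicit homotopy formula $V(x)=\int_0^1\langle F(tx),x\rangle\,dt$, with a direct computation showing $\partial_k V=F_k$, the symmetry hypothesis entering exactly once to turn the integrand into the total derivative $\frac{d}{dt}\bigl(t\,F_k(tx)\bigr)$. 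What your route buys is a self-contained, constructive argument with an explicit potential $V$ --- which is arguably useful downstream, since in the paper's Appendix this $V$ is the function fed into It\^o's formula --- at the cost of having to justify differentiation under the integral sign, which requires $DF$ to be continuous (i.e.\ $F\in C^1$), slightly more than the literal pointwise hypothesis of the lemma; you flag this correctly. Note also that under either argument the advertised smoothness of $V$ is only as good as that of $F$ (if $F$ is merely $C^1$, then $V$ is $C^2$); the paper is no more careful on this point, and in the setting where the lemma is applied ($B^{-1}f$ of class $C_b^2$) both arguments go through verbatim.
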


\begin{proof}
The dual space of $\mathbb{R}^d$ is itself. Also, the tangent bundle and cotangent bundle are both $\mathbb{R}^{2d}$. So for each $x\in\mathbb{R}^d$, $F(x)$ can be regard as a 1-form. Then it is a closed 1-form due to the symmetry of $DF(x)$. Thus by the Poincar\'{e} lemma \cite[Page 137 Theorem 4.1]{lang2012fundamentals} \cite[Page 350 Theorem 33.20]{kriegl1997convenient}, it is an exact form, i.e. there exists a smooth function $V:\mathbb{R}^d\rightarrow \mathbb{R}$, such that for all $x\in \mathbb{R}^d$, $DV(x)=F(x)$. $\square$
\end{proof}

\begin{rem}
Since we try to use It\^{o}'s formula to represent the stochastic integral, it requires the existence of the primary function for the drift coefficient. For one dimension \cite{chao2019onsager}, it is clear that the primary function is just its potential function. But in high dimensions, the primary function does not always exist. We need the symmetry of the drift coefficient to ensure the existence of the primary function. The Poincar\'{e} lemma provides a sufficient condition for the existence of the primary function, which states the relation between the closed form and the exact form.
\end{rem}

Now, we present a theorem about Onsager-Machlup action functional.
\begin{thm} $\mathbf{(Onsager-Machlup\ action\ functional)}$
Assume that the matrix $B$ is nondegenarate such that $B^{-1}f$ is $C_b^2$ in $x$, $\phi^h$ is defined by (2.6), and the L\'{e}vy jump measure $\nu$ satisfies that $\int_{|x|<1}|x|\nu(dx)<\infty$. Let $g(x)=(B^{-1})^{*}(B^{-1}f(x))$. If the gradient $\nabla_x g(x)$ is symmetric, the Onsager-Machlup action functional of system (2.5) is given by
\begin{equation}
\begin{split}
J_0(\phi^h,\dot{\phi}^h)=\int_0^1L(\phi^h,\dot{\phi}^h)ds,
\end{split}
\end{equation}
where the Lagrangian is
\begin{equation}
\begin{split}
L(\phi^h,\dot{\phi}^h)=\frac{1}{2}|B^{-1}[f(\phi^h(t))-\dot{\phi}^h(t)-\eta]|^2+\frac{1}{2}\mbox{Tr}[\nabla_xf(\phi^h(s))],
\end{split}
\end{equation}
with $\eta=\int_{|\xi|<1}\xi\nu(d\xi)$.
\end{thm}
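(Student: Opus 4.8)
The plan is to obtain the limit in Definition 2.1 by a change of measure that turns the shifted process into the reference process, and then to convert the resulting Radon--Nikodym density into an explicit path functional via It\^o's formula, using Lemma 2.2 to produce a potential and Lemma 2.1 to kill the jump contributions.

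First I would set $Z=X-\phi^h$ and $u(s)=B^{-1}f(X(s))-h(s)$, where $h=B^{-1}\dot\phi^h$. Since $B^{-1}f\in C_b^2$ and $h\in L^2$, the integrand $u$ is bounded on $[0,1]$, so Novikov's condition holds and Girsanov's theorem applies: under the measure $\tilde{\mathbb{P}}$ with $\frac{d\tilde{\mathbb{P}}}{d\mathbb{P}}=\exp\{-\int_0^1 u^T\,dW-\tfrac12\int_0^1|u|^2\,ds\}$, the process $\tilde W(t)=W(t)+\int_0^t u\,ds$ is a Brownian motion and $dZ=B\,d\tilde W+dL$ with $Z(0)=0$, so $Z$ has under $\tilde{\mathbb{P}}$ exactly the law of $X^L$ under $\mathbb{P}$. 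Rewriting the density in terms of $\tilde W$ and noting that the denominator equals $\tilde{\mathbb{P}}(\|Z\|\le\epsilon)$, the ratio collapses to a conditional expectation,
\[
\gamma_\epsilon(\phi^h)=\mathbb{E}_{\tilde{\mathbb{P}}}\!\left[\exp\Big\{\textstyle\int_0^1 u^T\,d\tilde W-\tfrac12\int_0^1|u|^2\,ds\Big\}\,\Big|\,\|Z\|\le\epsilon\right],
\]
which isolates the real difficulty: the It\^o integral $\int_0^1 u^T\,d\tilde W$ is not a continuous functional of the path and so cannot merely be evaluated at $\phi^h$.

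Next I would carry out the path representation. The algebraic identity $g(x)^T B=(B^{-1}f(x))^T$ gives $\int_0^1 u^T\,d\tilde W=\int_0^1 g(X)^T B\,d\tilde W-\int_0^1 h^T\,d\tilde W$. By the symmetry hypothesis on $\nabla_x g$, Lemma 2.2 furnishes a potential $V$ with $\nabla V=g$, and It\^o's formula for the c\`adl\`ag semimartingale $X$ lets me solve for $\int_0^1 g(X)^T B\,d\tilde W$ in terms of the boundary increment $V(X(1))-V(X(0))$, the drift integral $\int_0^1 g(X)^T Bh\,ds$, the second-order term $\tfrac12\int_0^1\mathrm{Tr}[\nabla g(X)BB^T]\,ds$, the jump integral $\int_0^1 g(X_{s^-})^T\,dL$, and the compensator of the jumps of $V(X)$. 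Two simplifications are essential: the cyclic trace identity $\mathrm{Tr}[\nabla g(x)BB^T]=\mathrm{Tr}[\nabla f(x)]$, which produces the trace term of the Lagrangian, and the decomposition $L(t)=\hat L(t)-t\eta$ with $\eta=\int_{|\xi|<1}\xi\,\nu(d\xi)$, valid precisely because $\int_{|x|<1}|x|\nu(dx)<\infty$ by Lemma 2.1, which extracts the drift $\int_0^1 g(X)^T\eta\,ds$ responsible for the $\eta$-dependence. I would then pass to the limit $\epsilon\to0$: on the tube $\{\|Z\|\le\epsilon\}$ every jump satisfies $|\Delta L_s|=|\Delta Z_s|\le 2\epsilon$, so Lemma 2.1 with $\int_{|x|<2\epsilon}|x|\nu(dx)\to0$ forces all pure-jump sums to vanish; the boundary increment cancels the drift integral because $V(\phi^h(1))-V(\phi^h(0))=\int_0^1 g(\phi^h)^T\dot\phi^h\,ds$; the deterministic-integrand integral $\int_0^1 h^T\,d\tilde W$ reduces, by integration by parts and the same small-jump estimate, to a further $\eta$-dependent drift plus a remainder of order $\epsilon$; and replacing $X$ by $\phi^h$ via the $C_b^2$ regularity makes the exponent converge, after completing the square in $\eta$, to $-J_0(\phi^h,\dot\phi^h)$ with the stated Lagrangian.

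The hard part is the path-representation step together with the justification of the limit inside the conditional expectation. One must show that the residual It\^o integral against $\tilde W$ and the jump sums not only converge pathwise on the tube but are uniformly integrable, so that the exponential moment survives the limit; this is where small-ball estimates for $X^L$ and the boundedness of $B^{-1}f$ enter. One must also track the compensator ($\eta$) contributions with care, since they appear in several places and only assemble into the completed square $\tfrac12|B^{-1}(f(\phi^h)-\dot\phi^h-\eta)|^2$ after all the cancellations above; the existence of the potential $V$, guaranteed only by the symmetry of $\nabla_x g$ through Lemma 2.2, is exactly what makes the high-dimensional representation possible at all.
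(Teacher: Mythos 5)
Your proposal follows essentially the same route as the paper's proof: a Girsanov transformation to absorb the drift (the paper applies it to an auxiliary process $Y$ with drift $Bh$, a mirror image of your direct change of measure on $X$), followed by the path representation through the Poincar\'e-lemma potential $V$ with $\nabla V = g$ and It\^o's formula, and finally Taylor expansion around $\phi^h$ on the tube with the jump contributions controlled via the bounded-variation condition $\int_{|x|<1}|x|\nu(dx)<\infty$. If anything, your handling of the jumps (bounding each jump by $2\epsilon$ on the tube so the jump sums vanish as $\epsilon\to 0$) and your explicit flagging of the uniform-integrability issue are more careful than the paper's appendix, which merely bounds the jump sum as finite and absorbs it into a constant.
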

\begin{proof}
The detailed proof is in Appendix A.
\end{proof}
\begin{rem}
In Theorem 2.5, we see that, the quadratic term is the main term, while the divergence term comes from the It\^{o} correction of Brownian motion. Moreover, only small jumps contribute to the Onsager-Machlup action functional and the effect is similar to adding the mean value of small jumps to the drift. Furthermore, if the integral $\eta=\int_{|\xi|_{\mathbb{R}^d}<1}\xi\nu(d\xi)$ is the zero element in $\mathbb{R}^d$, the L\'{e}vy noise will have no effect on the Onsager-Machlup action functional.
\end{rem}

\begin{rem}
In our result, we require the symmetry of the gradient $\nabla_x g(x)$, while in one dimension \cite{chao2019onsager}, the drift does not. In high dimension, we apply the Poincar\'{e} lemma which requires the symmetry condition to obtain the original function. Actually in one dimension, the symmetry condition already holds. Furthermore, we could deal with the large jumps in our models because they can be controlled by the bounded variation. So for the $\alpha$-stable L\'{e}vy motion, the corresponding jump measure is given by
\begin{align*}
\nu(dx)=c_1|\xi|^{-1-\alpha}\chi_{(0,\infty)}d\xi+c_2|\xi|^{-1-\alpha}\chi_{(-\infty,0)}d\xi.
\end{align*}
Thus, $\int_{|x|<1}|x|\nu(dx)<\infty$ if and only if $\alpha<1$. Therefore, Theorem 2.5 is adaptive to the $\alpha$-stable L\'{e}vy motion with $\alpha<1$.
\end{rem}
\begin{rem}
Note that $g(x)=(B^{-1})^{*}(B^{-1}f(x))$. If the matrix $B$ is symmetric, we need the symmetry of the gradient $\nabla_x f(x)$. Thus in this case, Theorem 2.5 holds only for potential systems. If $B$ is not a  square matrix, this theorem still holds since we can deal with the inverse for $BB^{*}$.
\end{rem}

\section{Euler-Lagrange equation and the most probable transition pathway}
In this section, we restrict ourselves on transition pathways with fixed initial and final states. We directly seek the minimizer of the Onsager-Machlup action functional $J_0(\phi^h,\dot{\phi}^h)$ to find approximation of the most probable transition pathway whenever it exists. We emphasize that, this most probable transition pathway is not the real path for the stochastic system, but it captures particle paths of the largest probability around its neighbourhood. As in classical mechanics, if the minimizer exists in classical sense, it satisfies the corresponding Euler-Lagrange equation.

Denote the Onsager-Machlup action functional $I[z]=J_0(z,\dot{z})=\int_0^1L(z,\dot{z})dt$ with Lagrangian given by Theorem 2.5
\begin{equation}
\begin{split}
L(z,\dot{z})=\frac{1}{2}|B^{-1}[f(z(t))-\dot{z}(t)-\eta]|^2+\frac{1}{2}Tr[\nabla_xf(z(t))],
\end{split}
\end{equation}
where $\eta=\int_{|\xi|_{\mathbb{R}^d}<1}\xi\nu(d\xi)$. Denote $\mathcal{A}=\{z\in H^1([0,1];\mathbb{R}^d),z(0)=z_0,z(1)=z_1\}$, the admissible set consisting of transition paths and $C_B$ the matrix norm of $B$.
\begin{thm}$\mathbf{(Euler-Lagrange\ Equation)}$ Assume that the drift term $f$ is $C^1$, which is global Lipschitz and its derivative is bounded below, i.e. there exists a constant $K$ such that  $\partial_if^i(z)\geq K$ for $i=1,...,d$. Also, we suppose that the jump measure $\nu$ satisfies $\frac{1}{d}\sum_{i=1}^d|f_i(x_0)-\eta_i|^2\geq\frac{C_2}{C_1}K$ for two positive constants $C_1$ and $C_2$ depending on Lipschitz constants of $f$. Then

(i) there exists at least one function $z^{*}$ such that $I[z^{*}]=\min\limits_{z\in\mathcal{A}}I[z]$. Furthermore, if we restrict ourselves to twice differentiable functions $z$, then it satisfies the Euler–Lagrange equation
\begin{equation}
\begin{split}
&\langle B^{-1}[\sum_{j=1}^d\frac{\partial f(z)}{\partial z_j}-\ddot{z}(t)],B^{-1}e_i\rangle\\
=\langle B^{-1}&[f(z(t))-\dot{z}(t)-\eta],B^{-1}\frac{\partial f(z)}{\partial z_i}\rangle+\frac{1}{2}\sum\limits_{j=1}^d\frac{\partial^2 f^j(z)}{\partial z_i\partial z_j},
\end{split}
\end{equation}
for $i=1,...,d$, where $z(0)=z_0,z(1)=z_1$.

(ii) Conversely, the classical solution of the Euler–Lagrange equation (3.2) is a local minimizer of $I[z]$.
\end{thm}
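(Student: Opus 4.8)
The plan is to prove part (i) by the direct method of the calculus of variations and part (ii) by a second-variation argument. Throughout I write $A=B^{-1}$ and $Q=A^{*}A$, which is symmetric and positive definite since $B$ is nondegenerate, so that the Lagrangian reads $L(z,\dot z)=\tfrac12\langle Q(f(z)-\dot z-\eta),\,f(z)-\dot z-\eta\rangle+\tfrac12\mathrm{Tr}[\nabla_x f(z)]$. Two structural features are immediate and will be used repeatedly: the integrand is convex in the velocity $\dot z$ (its Hessian in $\dot z$ is exactly $Q$, which is positive definite, i.e.\ the Legendre condition holds), and $|A\dot z|^{2}\ge C_B^{-2}|\dot z|^{2}$ gives a coercive leading term.

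For existence in (i), I would run the standard three-step scheme. First, \emph{coercivity}, which I expect to be the main obstacle. Expanding the quadratic part as $\tfrac12|A\dot z|^{2}-\langle A\dot z,\,A(f(z)-\eta)\rangle+\tfrac12|A(f(z)-\eta)|^{2}$, I would keep the coercive term $\tfrac1{2C_B^{2}}|\dot z|^{2}$ and control the indefinite cross term together with the trace term. Here the hypotheses do the work: global Lipschitzness of $f$ bounds $|A(f(z)-\eta)|^{2}$ by $C_1(1+|z|^{2})$, the lower bound $\partial_i f^i\ge K$ bounds the trace term below by $\tfrac{dK}{2}$, and since $z(0)=z_0$ is fixed a Poincar\'e-type estimate gives $\int_0^1|z|^{2}\,dt\le C(1+\|\dot z\|_{L^2}^{2})$. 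Absorbing the cross term into the leading term by Young's inequality and inserting these bounds produces $I[z]\ge c\|\dot z\|_{L^2}^{2}-C$; the condition $\tfrac1d\sum_i|f_i(x_0)-\eta_i|^{2}\ge \tfrac{C_2}{C_1}K$ is exactly the bookkeeping needed to keep the net coefficient $c$ positive once the constants $C_1,C_2$ generated by these Lipschitz/Young estimates are accounted for. (Alternatively, if one invokes the symmetry underlying Theorem 2.5, Lemma 2.4 supplies a potential $V$ with $\nabla V=Qf$, whence $\int_0^1\langle A\dot z,Af(z)\rangle\,dt=V(z_1)-V(z_0)$ is a boundary constant on $\mathcal A$ and the cross term disappears cleanly.) Second, \emph{weak lower semicontinuity}: since $L$ is continuous in $(z,\dot z)$ and convex in $\dot z$, Tonelli's theorem yields that $I$ is sequentially weakly lower semicontinuous on $H^1$. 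Third, the \emph{direct method}: a minimizing sequence is bounded in $H^1$ by coercivity, hence by reflexivity has a weakly convergent subsequence whose limit $z^{*}\in\mathcal A$ satisfies $I[z^{*}]=\min_{\mathcal A}I$ by lower semicontinuity.

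For the Euler--Lagrange equation, once $z^{*}$ is assumed twice differentiable I would compute the first variation directly. One finds $\partial_{\dot z_i}L=-\langle A(f(z)-\dot z-\eta),Ae_i\rangle$ and $\partial_{z_i}L=\langle A(f(z)-\dot z-\eta),A\tfrac{\partial f}{\partial z_i}\rangle+\tfrac12\sum_j\partial^2_{z_iz_j}f^j$, and the stationarity condition $\tfrac{d}{dt}\partial_{\dot z_i}L=\partial_{z_i}L$, after differentiating $f(z(t))$ in $t$ by the chain rule, produces the system (3.2) for $i=1,\dots,d$ together with the boundary data $z(0)=z_0,\,z(1)=z_1$.

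For the converse (ii), I would show a classical solution $z$ of (3.2) is a local minimizer via the second variation. For an admissible perturbation $\varphi$ with $\varphi(0)=\varphi(1)=0$, the first-order term in $I[z+\varphi]-I[z]$ vanishes because $z$ solves (3.2), and the second-order term is the quadratic form $\int_0^1\big(\langle Q\dot\varphi,\dot\varphi\rangle+2\langle L_{pz}\varphi,\dot\varphi\rangle+\langle L_{zz}\varphi,\varphi\rangle\big)\,dt$. The Legendre term obeys $\langle Q\dot\varphi,\dot\varphi\rangle\ge C_B^{-2}|\dot\varphi|^{2}$ and is positive definite, while the lower-order contributions $L_{pz},L_{zz}$ are controlled by the derivatives of $f$; estimating them through Poincar\'e's inequality $\int|\varphi|^{2}\le C_P\int|\dot\varphi|^{2}$ (valid since $\varphi$ vanishes at both endpoints) and Young's inequality, the same structural hypotheses that furnished coercivity force this quadratic form to be nonnegative for $\|\varphi\|$ small. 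Hence $z$ is a local minimizer of $I$, completing the argument.
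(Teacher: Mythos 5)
Your treatment of part (i) is essentially the paper's own argument: the paper establishes coercivity via the estimate $I[z]\geq \frac{1}{2C_2C_B^2}\int_0^1|\dot z(t)|^2dt-\frac{C_1}{2C_2C_B^2}\sum_{i=1}^d|f_i(x_0)-\eta_i|^2+\frac{1}{2C_B^2}K$ (cited from Chao--Duan and Wan rather than rederived, which is the computation you spell out by hand), combines it with convexity of $L$ in $\dot z$, and invokes Theorem 2 of Evans (page 448), which is exactly the Tonelli/direct-method package you unpack; the first-variation computation yielding (3.2) is the same. So for (i) you are correct and aligned with the paper.

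Part (ii) is where you genuinely diverge, and where your argument has a gap. The paper proves the converse by convexity alone: since $L(z,\dot z)$ is convex in $\dot z$, it appeals to Theorem 5 of Evans (page 453), the statement that extremals of convex Lagrangians are minimizers, with no second-variation analysis at all. Your second-variation route cannot be completed under the stated hypotheses. The lower-order coefficients $L_{pz}$ and $L_{zz}$ of your quadratic form involve second derivatives of $f$ (terms of the type $\langle B^{-1}\partial^2_{z_iz_j}f,\,B^{-1}(f-\dot z-\eta)\rangle$) and, through the trace term $\tfrac12\mathrm{Tr}[\nabla_x f]$, even third derivatives of $f$. The hypotheses of Theorem 3.1 --- $f$ globally Lipschitz and $C^1$ with $\partial_i f^i\geq K$ --- bound first derivatives from above and the diagonal from below, but give no control whatsoever on $D^2f$ or $D^3f$; so the claim that ``the same structural hypotheses that furnished coercivity'' make the form nonnegative is unfounded, and the Poincar\'e/Young absorption has nothing with which to dominate these coefficients. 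Two further problems: a quadratic form is homogeneous, so its nonnegativity cannot be ``forced for $\|\varphi\|$ small'' --- smallness of $\varphi$ only controls the super-quadratic remainder, never the sign of the form itself; and even genuine nonnegativity of the second variation is merely a necessary condition for a local minimum, while sufficiency requires strict positivity (say $\delta^2I[z](\varphi)\geq c\|\varphi\|_{H^1}^2$) plus control of the remainder, i.e.\ the strengthened Legendre condition together with a Jacobi-type (no conjugate point) condition. Without additional assumptions bounding $D^2f$ and $D^3f$ relative to $C_B^{-2}$ and the Poincar\'e constant, an extremal could be a saddle, and your argument cannot exclude this; the convexity mechanism the paper relies on (which, to be fully rigorous, requires joint convexity of $L$ in $(z,\dot z)$, a point the paper itself glosses over) is what allows (ii) to be concluded without any such bounds.
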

\begin{proof}
According to the proof of \cite[Theorem 5.1]{chao2019onsager} or \cite[Lemma 4.2]{wan2018convergence}, we obtain
\begin{equation}
\begin{split}
I[z]\geq \frac{1}{2C_2C_B^2}\int_0^1|\dot{z}(t)|^2dt-\frac{C_1}{2C_2C_B^2}\sum_{i=1}^d|f_i(x_0)-\eta_i|^2+\frac{1}{2C_B^2}K,
\end{split}
\end{equation}
where $C_1$ and $C_2$ are two positive constants depending on Lipschitz constants of $f$. Since $\frac{1}{d}\sum_{i=1}^d|f_i(x_0)-\eta_i|^2\geq\frac{C_2}{C_1}K$, the coercivity on $I[\cdot]$ follows. On the other hand, $L(z,\dot{z})$ is convex in the variable
$\dot{z}$. By theorem 2 of \cite[Page 448]{evans2010pdes}, we know that there exits at least one minimizer of $I[z]$. Moreover, if it is $C^2$, it satisfies the Euler–Lagrange equation (3.2). Hence, we reach first conclusion. For the second result, since  $L(z,\dot{z})$ is convex in the variable $\dot{z}$, by the theorem 5 of \cite[Page 453]{evans2010pdes}, it is a local minimizer of $I[z]$. This completes the proof of Theorem 3.1.
\end{proof}

\begin{rem}
The existence theory of minimizers for the Onsager-Machlup functional in the space of smooth
functions through direct minimization is incomplete. Thus, Theorem 3.1 offers a sufficient condition to establish a minimizer among the functions in $\mathcal{A}$. The minimizer of $I[\cdot]$ could give the most probable pathway for system (2.5) and also offers a sufficient condition to obtain the minimizer in $C^2(\mathbb{R}^d)$ through the Euler–Lagrange equation.
\end{rem}

\begin{rem} Suppose that $B$ is a diagonal matrix $\diag\{b_1,...,b_d\}$. Since $Df(x)$ is symmetric for each $x\in\mathbb{R}^d$ and  $i=1,...,d$,  the Euler–Lagrange equation (3.2) reduces to the following Newton system
\begin{equation}
\begin{split}
\ddot{z_i}=\sum\limits_{j=1}^d\frac{b_i^2}{b_j^2}(f^j(z)-\eta_j)\frac{\partial f^j(z)}{\partial z_i}+\frac{b_i^2}{2}\sum\limits_{j=1}^d\frac{\partial^2 f^j(z)}{\partial z_i\partial z_j},
\end{split}
\end{equation}
with two boundary conditions
\begin{equation}
\begin{split}
z(0)=z_0,z(1)=z_1.
\end{split}
\end{equation}
\end{rem}

\section{An application to the Maier-Stein system}
Overdamped systems without detailed balance arise in the study of disordered materials, chemical reactions far from equilibrium, and theoretical ecology.
As a typical example of the overdamped systems, the Maier–Stein system has been chosen to illustrate our results. We shall consider the following Maier-Stein system \cite{maier1993escape,maier1996scaling}
\begin{equation}
\begin{split}
dx&=(x-x^3-\gamma xy^2)dt+dW(t)+dL_1(t)\\
dy&=-(1+x^2)ydt+dW(t)+dL_2(t),
\end{split}
\end{equation}
where $\gamma$ is a positive parameter, $W(t)$ is Brownian motion, and $L_1(t),L_2(t)$ are stable L\'{e}vy processes with $\alpha_1,\alpha_2<1$. As in \cite[Page210]{duan2015introduction}, we denote that $L_1 \sim S_{\alpha_1}(\sigma_1,\beta_1,\mu_1)$, and $L_2 \sim S_{\alpha_2}(\sigma_2,\beta_2,\mu_2)$. The characteristic exponent $\alpha$ lies in the range (0,2] and determines
the rate at which the tails of the distribution decrease. The scale
parameter $\sigma$ compresses or extends the distribution about $\mu$. The parameter $\beta$ determines the skewness of the distribution, which lies [-1,1]. The location parameter $\mu$ shifts the distribution to the left or right.

Note that there exist two stable nodes SN1$(-1,0)$ and SN2$(1,0)$ and one unstable node US(0,0) for the corresponding deterministic system. We check the symmetry of $Df$ with $f(x,y)=(x-x^3-\gamma xy^2,-(1+x^2)y)^T$.
\begin{equation}
Df(x,y)=\left(
  \begin{array}{ccc}
    1-3x^2-\gamma y^2 & -2\gamma xy\\
    -2xy & -(1+x^2) \\
  \end{array}
\right)
\end{equation}
\begin{figure}[hp]
    \centering
    \subfloat[The vector field of Maier-Stein system]{
    \includegraphics[scale=0.45]{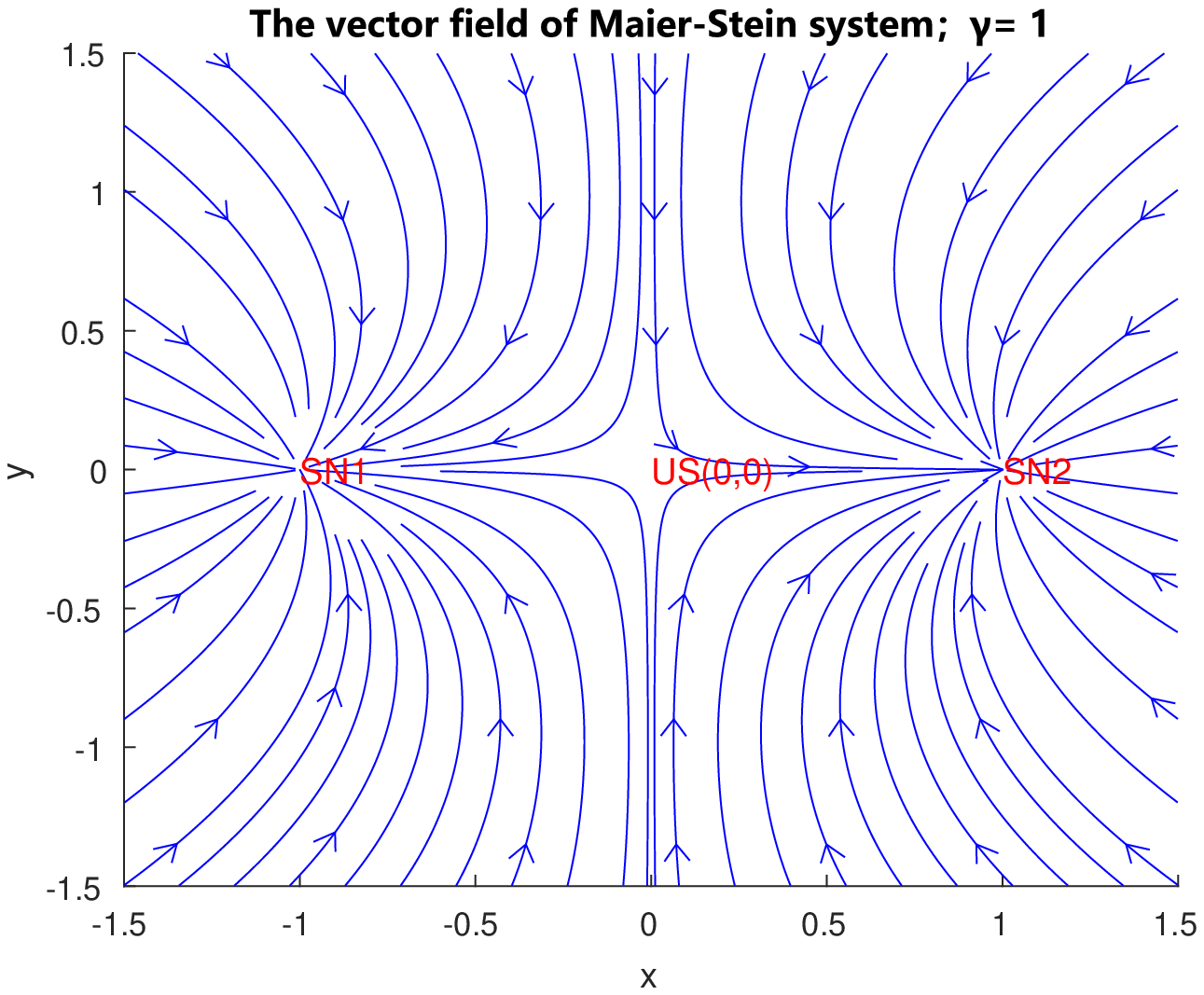}
    \label{fig:fig1vectorfield}
    }
    \subfloat[Potential]{
    \includegraphics[scale=0.35]{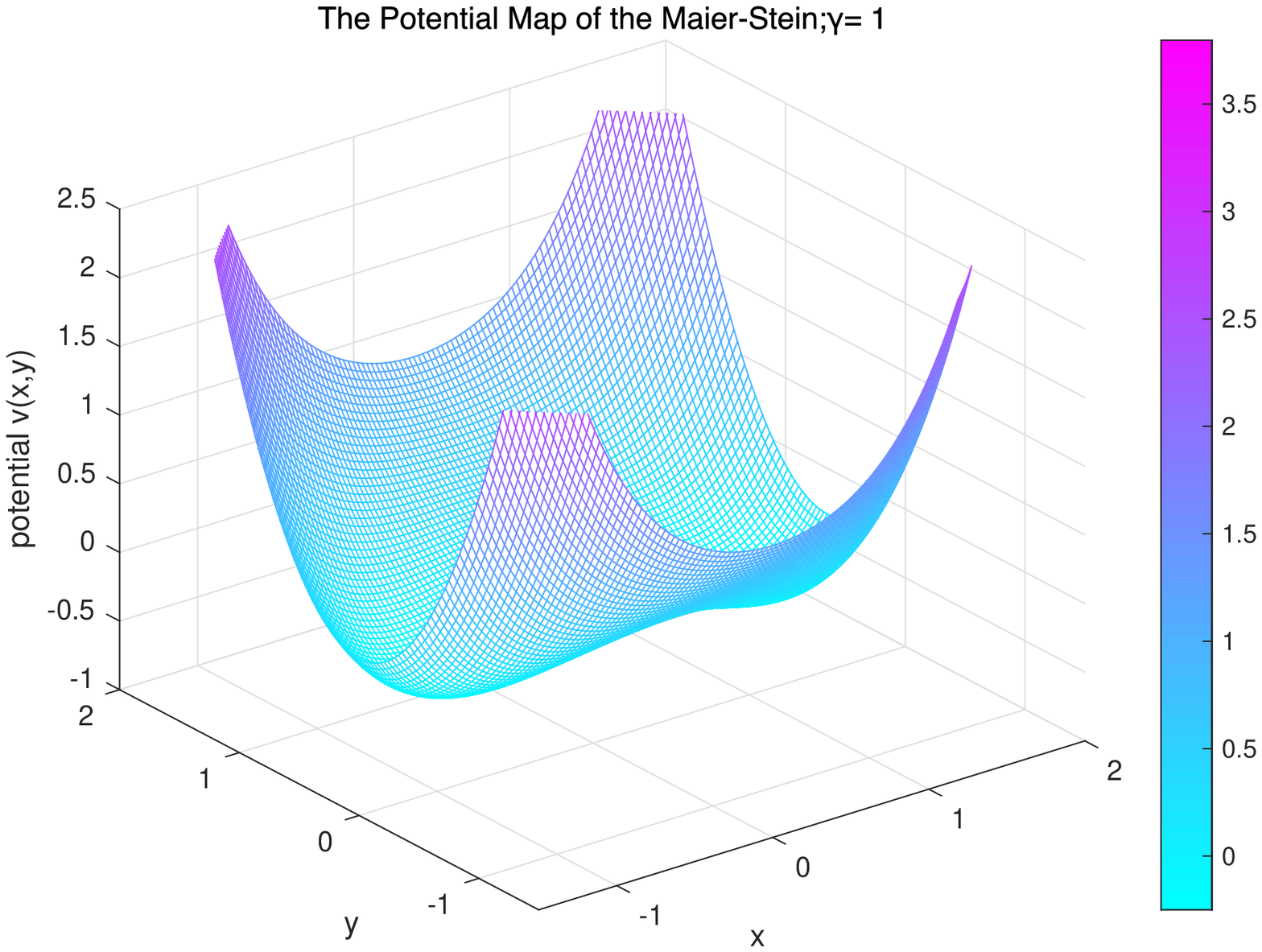}
    \label{fig:potential}
    }
    \caption{The patterns of the Maier-Stein system. The vector field is described by (a), the potential is shown as (b). }
\end{figure}
It is known that $Df$ is symmetric if and only if $\gamma=1$. The domains of attraction are separated by the stable manifold of the unstable saddle point US $(0,0)$, as shown in Fig. 1(a).
In this case, the potential function of the Maier-Stein system is
\begin{equation}
\begin{split}
V(x,y)=-\frac{1}{2}x^2+\frac{1}{4}x^4+\frac{1}{2}y^2+\frac{1}{2}x^2y^2.
\end{split}
\end{equation}

By Theorem 2.5, the Onsager-Machlup action functional for the Maier-Stein system we obtained is
\begin{equation}
\begin{split}
J_0(z,\dot{z})&=-\frac{1}{2}\int_0^1L(z,\dot{z})ds,
\end{split}
\end{equation}
where
\begin{equation}
\begin{split}
L(z,\dot{z})&=|f(z(t))-\dot{z}(t)-\eta|^2+\nabla\cdot f(z(t))\\
&=(x-x^3-\gamma xy^2-\dot{x}-\eta_1)^2-4x^2-\gamma y^2\\
&\ \ \ \ +((1+x^2)y+\dot{y}+\eta_2)^2.
\end{split}
\end{equation}
\begin{figure}[hp]
    \centering
    \includegraphics[scale=0.53]{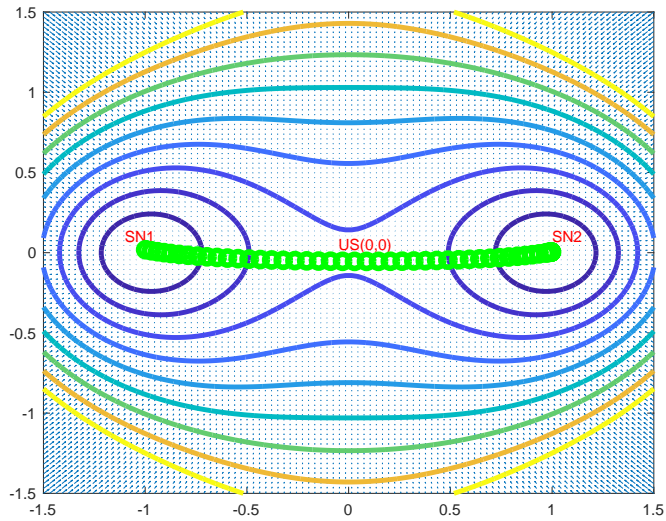}
    \label{fig:path1}
    \caption{The most probable transition pathway from SN2 to SN1 for system (4.1). The small green circle in the figure represents the transition pathway. Initial data z(0)=(1,0) and finial data z(0)=(-1,0). The L\'{e}vy noise $L_1\sim S_{0.5}(1,0.5,0)$ and $L_2\sim S_{0.7}(1,0,0)$.}
\end{figure}

By Theorem 3.1 and Remark 3.3, the corresponding Euler-Lagrange equation is

\begin{equation}
\begin{split}
\ddot{z}=g(z),
\end{split}
\end{equation}
with two boundary conditions $z(0)=(1,0)$ and $z(1)=(-1,0)$, where $g(z)=(g_1(z),g_2(z))$ is given by
\begin{figure}[hp]
    \centering
    \subfloat[The trajectories of x]{
    \includegraphics[scale=0.33]{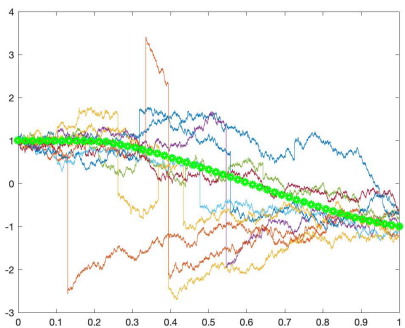}
    \label{fig:X}
    }
    \subfloat[The trajectories of y]{
    \includegraphics[scale=0.33]{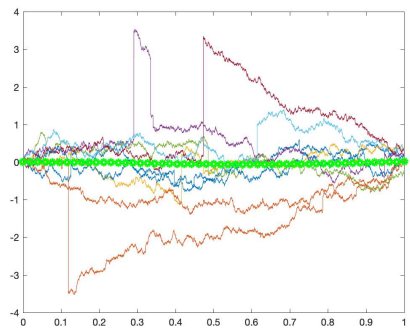}
    \label{fig:Y}
    }
    \caption{The patterns of sample paths and the most probable transition pathway.}
\end{figure}
\begin{equation}
\begin{split}
g_1(x,y)&=3x^5+6x^3y^2-4x^3+3\eta_1x^2+xy^4+2\eta_2xy-3z_1-\eta_1+\eta_1 y^2\\
g_2(x,y)&=3x^4y+2x^2y^3+\eta_2x^2+2\eta_1xy+\eta_2.
\end{split}
\end{equation}

Then we use the shooting neural networks method \cite{ibraheem2011shooting} to solve the two-point boundary value problem (4.7) numerically, which is shown in Fig.2. The green one connecting SN1 and SN2 describes the most probable transition pathway. And the interesting phenomenon is that it chooses the relevantly flat potential areas to transit, which seems somehow reasonable.

More specifically, we investigate the sample paths and the most probable transition pathway for the two components $x$ and $y$ respectively, shown in Fig.3. The most of the sample paths are located around the green one. We emphasise that the most probable transition pathway is not the real sample path of the particles, but it describes the largest probability of sample paths around its neighbourhood. From that, we conclude that it is reasonable to apply the Onsager-Machlup action functional and the Euler–Lagrange equation to determine the most probable transition pathway of the jump-diffusion processes. Please see \cite{transition-path-web} for more details  .
\section{Conclusion and discussion}
In this paper, we have derived the Onsager-Machlup action functional for a class of stochastic dynamical systems with additive L\'{e}vy noise in high dimensions. As the integral of a Lagrangian, the Onsager-Machlup action functional enables the study of the transition phenomena between metastable states. Then by a variational principle, the most probable path connecting two metastable states satisfies the Euler-Lagrange equation. A shooting neural network method works well to solve the Euler-Lagrange equation numerically.

However, for stochastic dynamical system (2.1) with multiplicative diffusion coefficient, the Onsager-Machlup action functional is unknown, even for one dimension. Our method is not applicable here either. Note that for stochastic dynmaical system (2.1) with only multiplicative diffusion coefficient  (i.e., the L\'evy noise is absent), we can use an appropriate reference process whose radial is a Bessel process under the uniform norm. But unfortunately, as long as L\'evy noise is present, it is hard to find such a process whose radial is a process independent of the drift and diffusion coefficients.

In addition, in our method, we use the Brownian motion to absorb the drift by applying the Girsanov theorem. But for stochastic dynamical system (2.1) with pure L\'evy noise (i.e., the Brownian noise is absent), we are unable to  make our method work, because the Girsanov transformation for pure L\'evy noise will result in a martingale, instead of a compensated Poisson process, in a new probability space.

\section*{Acknowledgements}
We would like to thank Jinqiao Duan for helpful comments and show great gratitude to Ying Chao, Yang Li and Xiaoli Chen for helpful discussions. This work was partly supported by NSFC grants 11771449, 11531006 and 11801192.
\bibliographystyle{abbrv}
\bibliography{References}

\begin{thebibliography}{10}

\bibitem{albeverio2010existence}
S.~Albeverio, Z.~Brze{\'z}niak, and J.-L. Wu.
\newblock Existence of global solutions and invariant measures for stochastic
  differential equations driven by {P}oisson type noise with non-{L}ipschitz
  coefficients.
\newblock {\em Journal of Mathematical Analysis and Applications},
  371(1):309--322, 2010.

\bibitem{applebaum2009levy}
D.~Applebaum.
\newblock {\em L{\'e}vy {P}rocesses and {S}tochastic {C}alculus}.
\newblock Cambridge {U}niversity {P}ress, 2009, 2nd Edition.

\bibitem{arnold2013random}
L.~Arnold.
\newblock {\em Random Dynamical Systems}.
\newblock Springer, Berlin, 2003.

\bibitem{bardina2002asymptotic}
X.~Bardina, C.~Rovira, and S.~Tindel.
\newblock Asymptotic evaluation of the {P}oisson measures for tubes around jump
  curves.
\newblock {\em Applicationes Mathematicae}, 29:145--156, 2002.

\bibitem{bardina2003onsager}
X.~Bardina, C.~Rovira, and S.~Tindel.
\newblock Onsager-{M}achlup functional for stochastic evolution equations.
\newblock In {\em Annales de l'Institut Henri Poincare (B) Probability and
  Statistics}, volume~39, pages 69--93. Elsevier, 2003.

\bibitem{bertoin1996levy}
J.~Bertoin.
\newblock L{\'e}vy {P}rocesses {C}ambridge {U}niversity {P}ress.
\newblock {\em Melbourne, NY}, 1996.

\bibitem{bottcher2013levy}
B.~B{\"o}ttcher, R.~Schilling, and J.~Wang.
\newblock L{\'e}vy {M}atters. {III}.
\newblock {\em Lecture Notes in Mathematics}, 2099, 2013.

\bibitem{capitaine1995onsager}
M.~Capitaine.
\newblock Onsager-{M}achlup functional for some smooth norms on {W}iener space.
\newblock {\em Probability theory and related fields}, 102(2):189--201, 1995.

\bibitem{capitaine2000onsager}
M.~Capitaine.
\newblock On the {O}nsager-{M}achlup functional for elliptic diffusion
  processes.
\newblock In {\em S{\'e}minaire de Probabilit{\'e}s XXXIV}, pages 313--328.
  Springer, 2000.

\bibitem{chao2019onsager}
Y.~Chao and J.~Duan.
\newblock The {O}nsager-{M}achlup function as {L}agrangian for the most
  probable path of a jump-diffusion process.
\newblock {\em Nonlinearity}, 32(10):3715, 2019.

\bibitem{ditlevsen1999observation}
P.~D. Ditlevsen.
\newblock Observation of $\alpha$-stable noise induced millennial climate
  changes from an ice-core record.
\newblock {\em Geophysical Research Letters}, 26(10):1441--1444, 1999.

\bibitem{duan2015introduction}
J.~Duan.
\newblock {\em An Introduction to Stochastic Dynamics}.
\newblock Cambridge University Press, 2015.

\bibitem{duan2014effective}
J.~Duan and W.~Wei.
\newblock {\em Effective Dynamics of Stochastic Partial Differential
  Equations}.
\newblock Elsevier, 2014.

\bibitem{durr1978onsager}
D.~D{\"u}rr and A.~Bach.
\newblock The {O}nsager-{M}achlup function as {L}agrangian for the most
  probable path of a diffusion process.
\newblock {\em Communications in Mathematical Physics}, 60(2):153--170, 1978.

\bibitem{evans2010pdes}
L.~C. Evans.
\newblock {\em Partial Differential Equations (2nd edn)}.
\newblock Providence RI: American Mathematical Society, 2010.

\bibitem{fujita1982onsager}
T.~Fujita and S.-i. Kotani.
\newblock The {O}nsager-{M}achlup function for diffusion processes.
\newblock {\em Journal of Mathematics of Kyoto University}, 22(1):115--130,
  1982.

\bibitem{hao2014asymmetric}
M.~Hao, J.~Duan, R.~Song, and W.~Xu.
\newblock Asymmetric non-{G}aussian effects in a tumor growth model with
  immunization.
\newblock {\em Applied Mathematical Modelling}, 38(17-18):4428--4444, 2014.

\bibitem{hara2016stochastic}
K.~Hara and Y.~Takahashi.
\newblock Stochastic analysis in a tubular neighborhood or {O}nsager-{M}achlup
  functions revisited.
\newblock {\em arXiv preprint arXiv:1610.06670}, 2016.

\bibitem{hu2020onsager}
J.~Hu and J.~Duan.
\newblock Onsager-machlup action functional for stochastic partial differential
  equations with {L}\'{e}vy noise.
\newblock {\em arXiv preprint arXiv:2011.09690}, 2020.

\bibitem{ibraheem2011shooting}
K.~I. Ibraheem and B.~M. Khalaf.
\newblock Shooting neural networks algorithm for solving boundary value
  problems in {ODE}s.
\newblock {\em Applications and Applied Mathematics}, 6(11):1927--1941, 2011.

\bibitem{ikeda2014stochastic}
N.~Ikeda and S.~Watanabe.
\newblock {\em Stochastic Differential Equations and Diffusion Processes}.
\newblock Elsevier, 2014.

\bibitem{imkeller2012stochastic}
P.~Imkeller and J.-S. Von~Storch.
\newblock {\em Stochastic Climate Models}, volume~49.
\newblock Birkh{\"a}user, 2012.

\bibitem{jacod2013limit}
J.~Jacod and A.~N. Shiryaev.
\newblock {\em Limit Theorems for Stochastic Processes}.
\newblock Springer Science \& Business Media, 2013.

\bibitem{transition-path-web}
JianyuChen.
\newblock \url{https://github.com/Cecilia-ChenJY/transition-pathway}.

\bibitem{jourdain2012levy}
B.~Jourdain, S.~M{\'e}l{\'e}ard, and W.~A. Woyczynski.
\newblock L{\'e}vy flights in evolutionary ecology.
\newblock {\em Journal of mathematical biology}, 65(4):677--707, 2012.

\bibitem{kriegl1997convenient}
A.~Kriegl and P.~W. Michor.
\newblock {\em The Convenient Setting of Global Analysis}, volume~53.
\newblock American Mathematical Soc., 1997.

\bibitem{lang2012fundamentals}
S.~Lang.
\newblock {\em Fundamentals of Differential Geometry}, volume 191.
\newblock Springer Science \& Business Media, 2012.

\bibitem{maier1993escape}
R.~S. Maier and D.~L. Stein.
\newblock Escape problem for irreversible systems.
\newblock {\em Physical Review E}, 48(2):931, 1993.

\bibitem{maier1996scaling}
R.~S. Maier and D.~L. Stein.
\newblock A scaling theory of bifurcations in the symmetric weak-noise escape
  problem.
\newblock {\em Journal of statistical physics}, 83(3):291--357, 1996.

\bibitem{onsager1953fluctuations}
L.~Onsager and S.~Machlup.
\newblock Fluctuations and irreversible processes.
\newblock {\em Physical Review}, 91(6):1505, 1953.

\bibitem{qiu2000kuroshio}
B.~Qiu and W.~Miao.
\newblock Kuroshio path variations south of {J}apan: {B}imodality as a
  self-sustained internal oscillation.
\newblock {\em Journal of Physical Oceanography}, 30(8):2124--2137, 2000.

\bibitem{ren2020identifying}
J.~Ren and J.~Duan.
\newblock Identifying stochastic governing equations from data of the most
  probable transition trajectories.
\newblock {\em arXiv:2002.10251}, 2020.

\bibitem{shepp1992note}
L.~A. Shepp and O.~Zeitouni.
\newblock A note on conditional exponential moments and {O}nsager-{M}achlup
  functionals.
\newblock {\em Annals of Probability}, pages 652--654, 1992.

\bibitem{takahashi1981probability}
Y.~Takahashi and S.~Watanabe.
\newblock The probability functionals ({O}nsager-{M}achlup functions) of
  diffusion processes.
\newblock In {\em Stochastic Integrals}, pages 433--463. Springer, 1981.

\bibitem{wan2018convergence}
X.~Wan, H.~Yu, and J.~Zhai.
\newblock Convergence analysis of a finite element approximation of minimum
  action methods.
\newblock {\em SIAM Journal on Numerical Analysis}, 56(3):1597--1620, 2018.

\bibitem{zheng2016transitions}
Y.~Zheng, L.~Serdukova, J.~Duan, and J.~Kurths.
\newblock Transitions in a genetic transcriptional regulatory system under
  {L}{\'e}vy motion.
\newblock {\em Scientific reports}, 6:29274, 2016.

\bibitem{zheng2020maximum}
Y.~Zheng, F.~Yang, J.~Duan, X.~Sun, L.~Fu, and J.~Kurths.
\newblock The maximum likelihood climate change for global warming under the
  influence of greenhouse effect and {L}{\'e}vy noise.
\newblock {\em Chaos}, 30(1):013132, 2020.

\end{thebibliography}
\begin{appendices}

\section{Proof of Theorem 2.5}
\begin{proof}
We will prove this theorem in the following 3 steps.

$\mathbf{Step}$ $\mathbf{1}$: An application of Girsanov transformation.

Let $Y(t)$ be the solution of the stochastic differential equation with respect to
$(\Omega, \mathcal{F}, (\mathcal{F}_t)_{t\geq0}, \mathbb{P})$
\begin{equation}
\begin{split}
dY(t)=Bh(t)dt+BdW(t)+ dL(t), t\in[0,1],\\
\end{split}
\end{equation}
with initial data $Y(0)=x_0\in \mathbb{R}^d$.
By the Girsanov transformation \cite[Theorem 3.17]{jacod2013limit}, $\mathbb{Q}$ defined as $d\mathbb{Q}(\omega)=M_t(\omega)d\mathbb{P}(\omega)$ is a probability measure, where
\begin{align*}
M_t=\exp&\{ \int_0^t\langle B^{-1}(f(s,Y(s))-Bh(s)),dW(s)\rangle\\
\ \ \  &-\frac{1}{2}\int_0^t|B^{-1}(f(s,Y(s))-Bh(s))|^2ds\}.
\end{align*}
The process $\widehat{W}_t:=W_t-\int_0^tB^{-1}(f(s,Y(s))-Bh(s))ds$ is a Brownian motion under the new filtered probability space $(\Omega, \mathcal{F}, (\mathcal{F}_t)_{t\geq0}, \mathbb{Q})$. Moreover, $\widetilde{N}(dt,dy)$ is still the compensated martingale measure with jump measure $\nu$ with respect to $\mathbb{Q}$. Thus the process $Y_t$ respect to $\mathbb{Q}$ has the stochastic differential representation
\begin{equation}
\begin{split}
dY(t)=f(Y(t))dt+Bd\widehat{W}(t)+ dL(t).
\end{split}
\end{equation}
Due to the uniqueness in distribution, we have the equality of $\mu_X$ and $\mu_Y^{\mathbb{Q}}$. Then we obtain
\begin{align*}
\frac{d\mu_X}{d\mu_Y}[Y_t(\omega)]&=\exp\{ \int_0^t\langle B^{-1}(f(Y(s))-Bh(s)),dW(s)\rangle\\
& \ \ \ \ \ -\frac{1}{2}\int_0^t|B^{-1}(f(Y(s))-Bh(s))|^2ds\}.
\end{align*}
Hence,
\begin{equation}
\begin{split}
\mathbb{P}(\|X-\phi^h\|\leq\epsilon)=\mathbb{Q}(\|Y-\phi^h\|\leq\epsilon)=\mathbb{E}[\exp\{\Lambda\}1_{|X^L|\leq\epsilon}],
\end{split}
\end{equation}
where
\begin{equation}
\begin{split}
\Lambda&=\int_0^1\langle B^{-1}(f(X^L(s_{-})+\phi^h(s))-Bh(s)),dW(s)\rangle\\
&\ \ -\frac{1}{2}\int_0^1|B^{-1}(f(X^L(s_{-})+\phi^h(s))-Bh(s))|^2ds.
\end{split}
\end{equation}

$\mathbf{Step}$ $\mathbf{2}$: Representation by a path integral.

We want to study the limiting behaviours of (A.3). Thus, we try to deal with the stochatic integral in (A.4) by a path integral.

By Lemma 2.3, there exists a  $C^2$ function $V:\mathbb{R}^d\rightarrow \mathbb{R}$, such that for each $x$, $DV(x)=(B^{-1})^{*}(B^{-1}F(x)-Bh)$, $D^2V(x)\in L_{(HS)}(\mathbb{R}^d,\mathbb{R}^d)$ and that the mapping $x\mapsto D^2V(x)$ is uniformly continuous on any bounded subset of $\mathbb{R}^d$. Applying It\^{o} formula \cite[Theorem 4.4.7]{applebaum2009levy} for this function $V$ with respect to $Y(t)$, we obtain
\begin{equation}
\begin{split}
\Lambda&=V(y(1))-V(y(0))- \sum\limits_{0\leq t\leq 1}[V(y(t))-V(y(t_{-}))]-\int_0^1b(y(t_{-})) dt\\
&+\int_0^1\int_{|\xi|<1} \langle B^{-1}(f(y(t-))-h(t)),B^{-1}\xi\rangle\nu(d\xi)dt,
\end{split}
\end{equation}
where $b$ is defined as
\begin{equation}
\begin{split}
b(y(t_{-}))&=|B^{-1}(f(y(t_{-}))-Bh(t))|^2+\sum\limits_{j=1}^{d}D^2_{jj}V(y(t_{-})))Be_j\otimes Be_j \\
&+2\langle B^{-1}(f(y(t_{-}))-Bh(t)),h(t)\rangle.
\end{split}
\end{equation}

$\mathbf{Step}$ $\mathbf{3}$: Taylor expansion and asymptotic behavior.

The integrals with respect to time variable $t$ in (A.6) are Riemann integrals. Now we expand the exponent of (A.6) into a Taylor series around $y(t)=\phi^h(t)$ due to $|X^L(t)|\leq\epsilon$, and split the terms of zero order. If we choose $\epsilon$ small enough, the remaining terms can be made arbitrarily small.

Since $B^{-1}F$ is a $C_b^2$ in $x$ uniformly in $t\in[0,1]$, and $V$ is at least $C^2$ function, we obtain
\begin{equation}
\begin{split}
V(y(1))&=V(\phi^h(1))+\langle DV(\phi^h(1)),X^L(1)\rangle+o(\epsilon),\\
b(y(t-))&=|B^{-1}f(\phi^h(t))-h(t)|^2+2\langle B^{-1}f(\phi^h(t))-h(t),h(t))\rangle\\
&\ \ +Tr(D_xf(\phi^h(t)))+Tr(\langle D_x^2f(\phi^h(t)),X^L(t)\rangle)\\
&\ \  +2\langle B^{-1}f(\phi^h(t))-h(t),h(t)\rangle\\
&\ \ +2\langle B^{-1}f(\phi^h(t))-h(t),\langle D_xf(\phi^h(t)),X^L(t)\rangle\rangle +o(\epsilon),\\
\end{split}
\end{equation}
and
\begin{equation}
\begin{split}
\langle B^{-1}f(&y(t-))-h(t),B^{-1}\eta\rangle=\langle
B^{-1}f(\phi^h(t))-h(t),B^{-1}\eta\rangle\\
&\ \ \ \ \ \ \ \ \ \ \ \ \ \ \ \ \ \ \ \ \ \ \ \ \ \ \ \ +\langle B^{-1}\langle D_xf(\phi^h(t)),X^L(t)\rangle,B^{-1}\eta\rangle+o(\epsilon),
\end{split}
\end{equation}
where $\eta=\int_{|\xi|<1}\xi\nu(d\xi)$. Note that
\begin{equation}
\begin{split}
V(\phi^h(1))-V(\phi^h(0))=\int_0^1\langle\dot{\phi}^h(t),(B^{-1})^{*}(B^{-1}f(\phi^h(t))-h(t))\rangle dt.
\end{split}
\end{equation}

Since $|X^A(t)|\leq\epsilon$, by H\"{o}lder inequality and by the fact of $\dot{\phi}^h(t)=h(t)$, we obtain
\begin{equation}
\begin{split}
\Lambda&=V(\phi^h(1))-V(\phi^h(1))\\
&\ \ -\int_0^1Tr(D_xf(\phi^h(t)))dt-\int_0^1|B^{-1}f(\phi^h(t))-h(t)|^2dt\\
&\ \ -2\int_0^1\langle B^{-1}f(\phi^h(t))-h(t),B^{-1}\eta\rangle dt\\
&\ \ +\sum\limits_{0\leq t\leq 1}[V(y(t))-V(y(t-))]+O(\epsilon)\\
&=-\frac{1}{2}\int_0^1|B^{-1}[f(\phi^h(t))-\dot{\phi}^h(t)]|^2dt\\
&\ \ \ -\frac{1}{2}\int_0^1Tr[\nabla_xf(\phi^h(s))]ds\\
&\ \ \ -\frac{1}{2}\int_0^1\langle B^{-1}[f(\phi^h(t))-\dot{\phi}^h(t)],B^{-1}\eta\rangle dt\\
&\ \ \ +\sum\limits_{0\leq t\leq 1}[V(y(t))-V(y(t-))]+O(\epsilon),
\end{split}
\end{equation}
where $\eta=\int_{|\xi|<1}\xi\nu(d\xi)$. As for the remaining term, we have the following control
\begin{equation}
\begin{split}
\sum_{0\leq t\leq 1}[V(y(t))-V(y(t-))]\leq |DV|\sum_{0\leq t\leq 1}|y(t)-y(t-)|,
\end{split}
\end{equation}
where the operator norm $|DV|$ is finite since the operator $D_xV(x)=(B^{-1})^{*}(B^{-1}F(x)-h)$ is $C_b^2$ in $x$. And $y(t)$ is given by
\begin{equation}
\begin{split}
y(t) &= \phi^h(t)+BW(t)+L(t).
\end{split}
\end{equation}
Since $\phi^h(t)+BW(t)$ is continuous in $t\in [0,1]$, we have
\begin{equation}
\begin{split}
\sum_{0\leq t\leq 1}|y(t)-y(t-)|=\sum_{0\leq t\leq 1}|L(t)-L(t-)|.
\end{split}
\end{equation}
Then by Lemma 2.2, we obtain
\begin{equation}
\begin{split}
\sum\limits_{0\leq t\leq 1}[V(y(t))-V(y(t_{-}))]&\leq\|DV\|\sum\limits_{0\leq t\leq 1}|\Delta y(t)|\\
&\leq C_0\sum\limits_{0\leq t\leq 1}\Delta L(t)<\infty
\end{split}
\end{equation}
Combining (A.3), (A.4), (A.10) and (A.14), we have
\begin{equation}
\begin{split}
&\mathbf{P}(\|X-\phi^h\|_2\leq\varepsilon)\propto\mathbf{E}[\exp\{J_0(\phi^h,\dot{\phi}^h)-C+O(\epsilon)\}\mathbf{1}_{\|X^A\|_2\leq\varepsilon}]
\end{split}
\end{equation}
where $C$ is a constant and  $J_0(\phi^h,\dot{\phi}^h)$ is given by
\begin{equation}
\begin{split}
J_0(\phi^h,\dot{\phi}^h)&=-\frac{1}{2}\int_0^1|B^{-1}[(f(\phi^h(t))-\dot{\phi}^h(t)]|^2dt\\
&\ \ \ -\frac{1}{2}\int_0^1Tr[\nabla_xf(\phi^h(t))]dt\\
&\ \ \ -\int_0^1\langle B^{-1}[f(\phi^h(t))-\dot{\phi}^h(t)],B^{-1}\eta\rangle dt.
\end{split}
\end{equation}
Hence, we obtain
\begin{equation}
\begin{split}
\lim\limits_{\epsilon\rightarrow0}\gamma_{\epsilon}(\phi)=\lim\limits_{\epsilon\rightarrow0}\frac{P(\|X-\phi^h\|_2\leq\epsilon)}{P(\|X^L\|_2\leq\epsilon)}\propto \exp\{J_0(\phi^h,\dot{\phi}^h)\}.
\end{split}
\end{equation}
Thus, $J_0(\phi^h,\dot{\phi}^h)$ is the desired Onsager-Machlup action functional by the Definition 2.1. And up to a constant, it can also be written as
\begin{equation}
\begin{split}
J_0(\phi^h,\dot{\phi}^h)&=-\frac{1}{2}\int_0^1|B^{-1}[f(\phi^h(t))-\dot{\phi}^h(t)-\eta]|^2dt\\
&\ \ \ -\frac{1}{2}\int_0^1Tr[\nabla_xf(\phi^h(s))]ds,
\end{split}
\end{equation}
where $\eta=\int_{|\xi|_{\mathbb{R}^d}<1}\xi\nu(d\xi)$. This completes the proof of Theorem 2.5.
\end{proof}
\end{appendices}
\end{document}